\newtheorem {theorem} {Theorem}%[section]
\newtheorem {proposition} [theorem]{Proposition}
\newtheorem {lemma}  [theorem]{Lemma}
\newtheorem {remark} [theorem]{Remark}
\newcommand{\R}{\ensuremath{\mathbb{R}}}
\newcommand{\aS}{\ensuremath{\mathbb{S}}}
\title[Limit cycles bifurcating from a degenerate center]
{Limit cycles bifurcating from a degenerate center}
\author[ J. Llibre and C. Pantazi]{ Jaume Llibre$^1$ and Chara Pantazi$^2$}
\address{$^1$ Departament de Matem\`{a}tiques, Universitat
Aut\`{o}noma de Barcelona, Edi\-fici C, 08193 Bellaterra, Barcelona,
Catalonia, Spain} \email{jllibre@mat.uab.cat}
\address{$^2$ Departament de Matem\`atica Aplicada I, Universitat
Polit\`ecnica de Cata\-lunya, (EPSEB), Av. Doctor Mara\~{n}\'on, 44--50,
08028 Barcelona, Spain} \email{chara.pantazi@upc.edu}
\subjclass[2010]{Primary 37G15, 34C07    Secondary 37C10,  34C05,
34A34}
\keywords{polynomial differential systems, centers, limit cycles,
averaging theory.}
\begin{document}

\begin{abstract}
We study the maximum number of limit cycles that can bifurcate from
a degenerate center of a cubic homogeneous polynomial differential
system. Using the averaging method of second order and perturbing
inside the class of all cubic polynomial differential systems we
prove that at most three limit cycles can bifurcate from the
degenerate center. As far as we know this is the first time that a
complete study up to second order in the small parameter of the
perturbation is done for studying the limit cycles which bifurcate
from the periodic orbits surrounding a degenerate center (a center
whose linear part is identically zero) having neither a Hamiltonian
first integral nor a rational one. This study needs many
computations, which have been verified with the help of the
algebraic manipulator Maple.
\end{abstract}

\maketitle

\section{Introduction}
Hilbert in \cite{H} asked for the maximum number of limit cycles
which real polynomial differential systems in the plane of a given
degree can have. This is actually the well known {\it 16th Hilbert
Problem}, see for example the surveys  \cite{Yu,Li}  and references
therein. Recall that a {\it limit cycle} of a planar polynomial
differential system is a periodic orbit of the system isolated in
the set of all periodic orbits of the system.

\medskip

Poincar\'e in \cite{point1} was the first to introduce the notion of
a center  for a vector field  defined on the real plane. So
according to Poincar\'e  a {\it center} is a singular point
surrounded by a neighborhood filled of periodic orbits with the
unique exception of the singular point.

\medskip
Consider the polynomial differential system
\begin{equation}
\dot{x}=P(x,y), \qquad \dot{y}=Q(x,y),
\label{first}
\end{equation}
and as usually  we denote by $ \dot{}={d}/{dt} $. Assume that system
\eqref{first}  has a center  located at the origin. Then after a
linear change of variables and a possible scaling of time system
\eqref{first} can be written in one of the following forms
$$
\begin{array}{lll}
\begin{array}{ll}
(A) &
%\left\{
\begin{array}{l}
\dot{x}=-y+F_1(x,y),\\
\dot{y}=x+F_2(x,y),
\end{array}
%\right.
\end{array}
&
\begin{array}{ll}
(B) &
%\left\{
\begin{array}{l}
\dot{x}=y+F_1(x,y),\\
\dot{y}=F_2(x,y),
\end{array}
%\right.
\qquad
\end{array}
&
\begin{array}{ll}
(C) &
%\left\{
\begin{array}{l}
\dot{x}=F_1(x,y),\\
\dot{y}=F_2(x,y),
\end{array}
%\right.
\end{array}
\end{array}
$$
with $F_1$ and $F_2$ polynomials  without constant and linear terms.
When system \eqref{first} can be written into the form (A) we say
that the center is of {\it linear type}. When system \eqref{first}
can take the form  (B)  the center is {\it nilpotent}, and when
system \eqref{first} can be transformed into the form  (C) the
center is {\it degenerate}.

\medskip

Due to the difficulty of this problem mathematicians have consider
simpler versions.  Thus Arnold \cite{Arnold} considered the {\it
weakened 16th Hilbert Problem}, which consists in determining an
upper bound for the number of limit cycles which can bifurcate  from
the periodic orbits of a polynomial Hamiltonian center when it is
perturbed inside a class of polynomial differential systems, see for
instance \cite{ChrisLi} and the hundred of references quoted
therein. It is known that in a neighborhood of a center always there
is a first integral, see \cite{MS}. When this first integral  is not
polynomial the computations become more difficult. Moreover, if the
center is degenerate the computations become even harder.

\medskip

In the literature we can basically find the following methods for
studying the limit cycles that bifurcate from a center:
\begin{itemize}
\item The method that uses the Poincar\'e return map, like the
articles \cite{BP, chico}.

\item The one that uses the Abelian integrals  or Melnikov integrals
(note that for systems in the plane the two notions are equivalent),
see for example section 5 of Chapter 6 of \cite{ArnoldIly} and
section 6 of Chapter 4 of \cite{GH}.

\item The one that uses the inverse integrating factor, see
\cite{GLV1,GLV2,GLV3,GLV4}.

\item The averaging theory \cite{Buica,GGLD,llibre, Sanders, Verhulst}.
\end{itemize}
The first two methods provide information about the number of limit
cycles whereas the last two methods additionally give the shape of
the bifurcated limit cycle up to any order in the perturbation
parameter.

\medskip

Almost all the papers studying how many limit cycles can bifurcate
from the periodic orbits of a center, work with centers of linear
type. There are very few papers studying this problem for nilpotent
or degenerate centers. In fact, for degenerate centers as far as we
known the bifurcation of limit cycles from the periodic orbits of a
degenerate center only have been studying completely using formulas
of first order in the small parameter of the perturbation. Here we
will provide a complete study of this problem using formulas of
second order, and as it occurs with the formulas of second order
applied to linear centers that they provide  in general more limit
cycles than the formulas of first order, the same occurs for the
formulas of second order applied to degenerate centers. Of course,
the computations from first order to second order increases almost
exponentially.

\medskip

This paper deals with the weakened 16th Hilbert's problem but
perturbing  non--Hamiltonian  degenerate centers using the technique
of the averaging method of second order, see \cite{GGLD}, and
Section \ref{saveraging} for  a summary of the results that we need
here.

\medskip

Since we want to study the perturbation of a degenerate center with
averaging of second order, from the homogeneous centers the first
ones that are degenerate, are the cubic homogeneous centers, see for
instance \cite{CL}. In this class in \cite{LLTT} the authors studied
the perturbation of the following cubic homogeneous center
\begin{equation}\label{1}
\dot{x}=-y(3x^2+y^2),\qquad \dot{y}=x(x^2-y^2),
\end{equation}
inside the class of all cubic polynomial differential systems, using
averaging theory of first order. Here we study this problem but
using averaging theory of second order.

\medskip

System \eqref{1} has a global center at the origin (i.e. all the
orbits contained in $\R^2\setminus\{(0,0)\}$ are periodic), and it
admits the non--rational first integral
$$
H(x,y)=(x^2+y^2)\exp\left(-\frac{2x^2}{x^2+y^2}\right).
$$
The limit cycles bifurcating from the periodic orbits of the global
center \eqref{1} have already been studied  in the following two
results, see \cite{LLTT} and \cite{Buica0}, respectively.
\begin{theorem}
We deal with differential system \eqref{1}. Then the polynomial
differential system
$$
\begin{array}{ccl}
\dot{x}&=&-y(3x^2+y^2)+\varepsilon \left(
\sum\limits_{0\leq i+j\leq 3}a_{ij}x^iy^{j}\right),\\
\dot{y}&=&x(x^2-y^2)+\varepsilon\left(
\sum\limits_{0\leq i+j\leq 3}b_{ij}x^iy^{j}\right),
\end{array}
$$
has at most one limit cycle bifurcating from the periodic orbits of
the  center of system \eqref{1} using averaging theory of first
order. Moreover, there are examples with 1 and 0 limit cycles.
\end{theorem}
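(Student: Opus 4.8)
The plan is to apply the first-order averaging method summarized in Section~\ref{saveraging} after bringing the perturbed system into standard form. First I would pass to polar coordinates $x=r\cos\theta$, $y=r\sin\theta$. Because $P$ and $Q$ in \eqref{1} are homogeneous of degree three, a direct computation gives $\dot\theta=r^{2}+O(\varepsilon)$ for the perturbed system; indeed for the unperturbed field one finds the angular velocity $Q\cos\theta-P\sin\theta$ reduces to $(\cos^{2}\theta+\sin^{2}\theta)^{2}=1$, so $\dot\theta=r^{2}>0$. Hence $\theta$ is a genuine time-like variable and each periodic orbit is traversed once as $\theta$ runs over $[0,2\pi]$. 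Dividing the two equations I obtain the scalar equation $dr/d\theta=-r\sin(2\theta)+\varepsilon\,F(\theta,r)+O(\varepsilon^{2})$, whose $O(1)$ part has solutions $r=\rho\exp(\tfrac12\cos2\theta)$, in agreement with the level curves of the first integral $H$.

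Next I would normalize the unperturbed flow. Setting $\rho=r\exp(-\tfrac12\cos2\theta)$ makes $\rho$ constant along the unperturbed periodic orbits, so the equation becomes $d\rho/d\theta=\varepsilon\,G(\theta,\rho)+O(\varepsilon^{2})$ with $G$ being $2\pi$-periodic in $\theta$. This is exactly the hypothesis needed for first-order averaging, whose averaged function is $f_{1}(\rho)=\int_{0}^{2\pi}G(\theta,\rho)\,d\theta$; each simple positive zero of $f_{1}$ (recall $\rho>0$) produces one hyperbolic limit cycle bifurcating from the corresponding periodic orbit of \eqref{1}.

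The core of the argument is to determine the shape of $f_{1}$. Writing the perturbation as $p=\sum a_{ij}x^{i}y^{j}$, $q=\sum b_{ij}x^{i}y^{j}$, one checks that $r^{2}F$ equals $p\cos\theta\cos2\theta+q\sin\theta(2+\cos2\theta)$, so $G$ splits according to the degree $k=i+j$ of the perturbing monomials; after the substitution $r=\rho\exp(\tfrac12\cos2\theta)$ the degree-$k$ part contributes $\rho^{\,k-2}\exp\big(\tfrac{k-3}{2}\cos2\theta\big)$ times a trigonometric polynomial. The key observation is a parity argument: under $\theta\mapsto\theta+\pi$ the weight $\exp(c\cos2\theta)$ is invariant, each monomial $\cos^{i}\theta\sin^{j}\theta$ is multiplied by $(-1)^{i+j}=(-1)^{k}$, and each of the factors $\cos\theta\cos2\theta$ and $\sin\theta(2+\cos2\theta)$ picks up a sign $-1$; hence the full integrand is multiplied by $(-1)^{k+1}$, and its integral over $[0,2\pi]$ vanishes whenever $k$ is even. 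Consequently only the odd degrees $k=1$ and $k=3$ survive, giving $f_{1}(\rho)=A_{-1}\rho^{-1}+A_{1}\rho$ for suitable constants $A_{-1},A_{1}$ depending on the $a_{ij},b_{ij}$.

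Finally, since $\rho>0$, the positive zeros of $f_{1}$ coincide with those of $A_{1}\rho^{2}+A_{-1}$, which has at most one positive root, and that root is simple whenever it exists. This yields at most one limit cycle. To complete the statement I would exhibit explicit coefficient choices realizing $A_{-1}A_{1}<0$ (one limit cycle) and $A_{-1}A_{1}\ge 0$ (no limit cycle). I expect the main obstacle to be the bookkeeping in computing $G$ and evaluating the surviving integrals $A_{-1},A_{1}$, which reduce to integrals of $\exp(c\cos2\theta)$ against trigonometric polynomials and hence to modified Bessel functions; however, for the bound itself only the structural form $f_{1}(\rho)=A_{-1}\rho^{-1}+A_{1}\rho$, guaranteed by the symmetry argument, is actually needed.
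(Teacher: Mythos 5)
Your proposal is correct and follows essentially the same route as the paper's first--order computation (the polar change of variables, the $\theta\mapsto\theta+\pi$ symmetry killing the even--degree contributions so that the averaged function has the form $A_{-1}\rho^{-1}+A_{1}\rho$, and hence at most one simple positive zero); your formula $r^{2}G_{1}=p\cos\theta\cos 2\theta+q\sin\theta(2+\cos 2\theta)$ matches the paper's Appendix~A expansion, and your rescaling $\rho=r\exp(-\tfrac12\cos 2\theta)$ is equivalent to the paper's division by the variational solution $u(\theta,r_0)=\exp(-\sin^{2}\theta)$. The only piece left implicit is checking that $A_{-1}$ and $A_{1}$ are independent linear--quadratic functionals of the coefficients (so that both signs of $A_{-1}A_{1}$ are realizable), which the paper's explicit evaluation $A_{-1}=(2a_{10}-2c_{01})I_{1}+(c_{01}-a_{10})I_{2}+c_{01}I_{3}$, $A_{1}=\tfrac{\pi}{2}(2c_{03}+a_{30}+c_{21})$ confirms.
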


\begin{proposition}
We consider the homogeneous polynomial differential system
\eqref{1}. Let $P_i(x,y)$ and $Q_i(x,y)$ for $i=1,2$ be polynomials
of degree at most 3. Then for convenient polynomials $P_i$ and
$Q_i$, the polynomial differential system
$$
\begin{array}{ccl}
\dot{x}&=&-y(3x^2+y^2)+\varepsilon P_1(x,y)+\varepsilon^2 P_2(x,y),\\
\dot{y}&=&x(x^2-y^2)+\varepsilon Q_1(x,y)+\varepsilon^2 Q_2(x,y),
\end{array}
$$
has at first order averaging one limit cycle, and at second order
averaging two limit cycles bifurcating from the periodic solutions
of the global center \eqref{1}.
\end{proposition}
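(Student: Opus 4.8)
The plan is to reduce the perturbed system to the normal form required by the averaging method of Section~\ref{saveraging} and then to realize the prescribed number of limit cycles as simple positive zeros of the first and second order averaged functions. First I would pass to polar coordinates $x=r\cos\theta$, $y=r\sin\theta$. For the unperturbed system~\eqref{1} a direct computation gives $r\dot r=x\dot x+y\dot y=-r^{4}\sin 2\theta$ and $r^{2}\dot\theta=x\dot y-y\dot x=r^{4}$, so that $\dot\theta=r^{2}>0$ and $\theta$ is a genuine time-like variable along every periodic orbit. Taking the quotient $\dot r/\dot\theta$ for the full system and expanding in powers of $\varepsilon$ yields
\begin{equation*}
\frac{dr}{d\theta}=-r\sin 2\theta+\varepsilon\,\Phi_{1}(\theta,r)+\varepsilon^{2}\,\Phi_{2}(\theta,r)+O(\varepsilon^{3}),
\end{equation*}
where $\Phi_{1},\Phi_{2}$ are $2\pi$-periodic in $\theta$ with coefficients that are linear (respectively quadratic) in the coefficients of $P_{1},Q_{1},P_{2},Q_{2}$.

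Because the leading term $-r\sin 2\theta$ is of order $\varepsilon^{0}$, this is not yet an averaging problem. To straighten the unperturbed flow I would use the first integral $H$, equivalently the substitution $r=\rho\,g(\theta)$ with $g(\theta)=\exp\!\big(\tfrac12(\cos 2\theta-1)\big)$, the $2\pi$-periodic solution of $g'=-g\sin 2\theta$; note $\rho^{2}=e^{2}H$ is constant on the unperturbed orbits. Then the $\varepsilon^{0}$ term cancels and one obtains the standard form
\begin{equation*}
\frac{d\rho}{d\theta}=\varepsilon\,F_{1}(\theta,\rho)+\varepsilon^{2}\,F_{2}(\theta,\rho)+O(\varepsilon^{3}),\qquad F_{k}(\theta,\rho)=\frac{1}{g(\theta)}\,\Phi_{k}\big(\theta,\rho\,g(\theta)\big),
\end{equation*}
which is $2\pi$-periodic in $\theta$. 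By the averaging theorem of Section~\ref{saveraging}, the simple positive zeros of $f_{1}(\rho)=\tfrac{1}{2\pi}\int_{0}^{2\pi}F_{1}(\theta,\rho)\,d\theta$ produce limit cycles; and when $f_{1}\equiv 0$ the relevant bifurcation function is the second order averaged function built from $F_{2}$ and from $y_{1}(\theta,\rho)=\int_{0}^{\theta}F_{1}(s,\rho)\,ds$.

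For the first order assertion I would compute $f_{1}$. After the substitution $r=\rho g(\theta)$, each monomial $x^{i}y^{j}$ of degree $d=i+j\le 3$ contributes a term proportional to $\rho^{\,d-2}g(\theta)^{\,d-3}$ times a trigonometric monomial, so $f_{1}(\rho)$ is a finite combination of the powers $\rho^{-2},\rho^{-1},\rho^{0},\rho^{1}$ with constant coefficients. In agreement with the preceding Theorem this function admits at most one simple positive zero, and a convenient choice of $P_{1},Q_{1}$ makes it attain exactly one, which yields one limit cycle at first order.

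For the second order assertion I would first impose the conditions, linear in the coefficients of $P_{1},Q_{1}$, that force $f_{1}\equiv 0$, and then evaluate
\begin{equation*}
f_{2}(\rho)=\frac{1}{2\pi}\int_{0}^{2\pi}\left[\frac{\partial F_{1}}{\partial\rho}(\theta,\rho)\,y_{1}(\theta,\rho)+F_{2}(\theta,\rho)\right]d\theta,
\end{equation*}
choosing convenient $P_{i},Q_{i}$ so that $f_{2}$ has exactly two simple positive zeros, hence two limit cycles. The hard part will be exactly this last computation: because $H$ is non-rational, the powers $g(\theta)^{p}=e^{-p/2}\exp(\tfrac{p}{2}\cos 2\theta)$ appear, and every average reduces to integrals of the type $\int_{0}^{2\pi}\cos^{i}\theta\,\sin^{j}\theta\,e^{(p/2)\cos 2\theta}\,d\theta$, which are combinations of modified Bessel functions $I_{n}(p/2)$. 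Propagating these transcendental coefficients through the extra integration defining $y_{1}$ and through the product $\partial_{\rho}F_{1}\cdot y_{1}$ is where the computations grow rapidly, and this is the step to be carried out and checked with Maple. Since both statements are realizations, it suffices to exhibit one explicit admissible choice of $P_{i},Q_{i}$ for each and to verify that the corresponding averaged function has the stated number of simple positive zeros.
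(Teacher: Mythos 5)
Your plan is sound and is, in substance, the route the paper itself follows: the Proposition is quoted from \cite{Buica0} rather than reproved here, but Section \ref{smain} carries out exactly this computation (for the stronger Theorem \ref{main}). Your change of variables $r=\rho\,g(\theta)$ with $g(\theta)=\exp(-\sin^2\theta)$ is precisely the normalization that the paper performs implicitly through the formulas \eqref{formulas} of \cite{GGLD}: there $u(\theta,r_0)=\exp(-\sin^2\theta)=g(\theta)$, $r_s(\theta,r_0)=r_0\,g(\theta)$, and since $G_0$ is linear in $r$ the term with $\partial^2G_0/\partial r^2$ vanishes, so your $f_1$ and $f_2$ coincide with $G_{10}/(2\pi)$ and $G_{20}/(2\pi)$. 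Your power count for $f_1$ also matches the paper's $G_1=g_{1,1}/r^2+g_{1,2}/r+g_{1,3}+g_{1,4}\,r$, and your identification of the averages with integrals of $\cos^i\theta\sin^j\theta\,e^{p\sin^2\theta}$ (the paper's $I_1,I_2,I_3$) is correct.

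What your proposal does not yet supply is the realization itself, and that is the actual content of the statement. Two points need to be made explicit. First, the symmetry $\theta\mapsto\theta+\pi$ annihilates the $\rho^{-2}$ and $\rho^{0}$ coefficients of $f_1$, so that $f_1(\rho)=\beta_1\rho^{-1}+\beta_2\rho$; this is what makes ``exactly one simple positive zero'' attainable and makes $f_1\equiv 0$ only two linear conditions (the paper's conditions on $a_{10}$ and $a_{30}$). Second, and more seriously, asserting that $f_2$ can be given two simple positive zeros after imposing $f_1\equiv 0$ requires checking that the surviving coefficients of $\rho^{5}f_2(\rho)$ (a polynomial in $\rho^2$ of degree $3$) can still be steered independently to produce two sign changes; a priori the constraint $f_1\equiv0$ could collapse them. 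The paper settles this by computing $v_0,v_2,v_4,v_6$ explicitly, noting that $d_{03}$ occurs only in $v_6$, $b_{10}$ only in $v_4$, $a_{00}c_{02}$ only in $v_2$ and $a_{00}c_{00}$ only in $v_0$, and then invoking Descartes (Theorem \ref{Descartes}); the explicit system \eqref{dio} exhibits the two cycles. Until that independence check (or an explicit example) is produced, what you have is a correct and well-targeted plan rather than a complete proof.
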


Our main result is the following one and it do by first time the
complete study of the averaging method of second order for a
degenerate center having neither a Hamiltonian first integral nor a
rational one.

\begin{theorem}\label{main}
We consider the cubic homogeneous differential system \eqref{1}.
Then the  perturbation of system \eqref{1} inside the class of all
cubic polynomial systems
\begin{equation}
\begin{array}{ccl}
\dot{x}&=&-y(3x^2+y^2)+\varepsilon \left(
\sum\limits_{0\leq i+j\leq 3}a_{ij}x^iy^{j}\right)+\varepsilon^2 \left(
\sum\limits_{0\leq i+j\leq 3}b_{ij}x^iy^{j}\right),\\
\dot{y}&=&x(x^2-y^2)+\varepsilon\left(
\sum\limits_{0\leq i+j\leq 3}c_{ij}x^iy^{j}\right)+\varepsilon^2 \left(
\sum\limits_{0\leq i+j\leq 3}d_{ij}x^iy^{j}\right),
\end{array}
\label{perturbed}
\end{equation}
has at most three limit cycles  bifurcating from the  periodic
orbits of the center of system \eqref{1} using averaging theory of
second order. Moreover, there are examples with 3, 2, 1 and 0 limit
cycles.
\end{theorem}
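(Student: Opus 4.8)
The plan is to reduce system \eqref{perturbed} to the standard form required by the second-order averaging method summarized in Section~\ref{saveraging}, and then to compute and analyze the two associated averaged functions. First I would pass to polar coordinates $x=r\cos\theta$, $y=r\sin\theta$. For the unperturbed system \eqref{1} a direct computation gives $x\dot y-y\dot x=(x^2+y^2)^2$, so that $\dot\theta=r^2>0$ away from the origin, and hence also for $|\varepsilon|$ small on any compact annulus bounded away from the origin; this lets me take $\theta$ as the new independent variable. Dividing $\dot r$ by $\dot\theta$ and expanding in powers of $\varepsilon$ yields
$$\frac{dr}{d\theta}=-r\sin(2\theta)+\varepsilon\,(\cdots)+\varepsilon^2\,(\cdots)+O(\varepsilon^3).$$
The unperturbed equation $dr/d\theta=-r\sin 2\theta$ integrates to $r=\rho\,e^{\frac12\cos 2\theta}$ with $\rho$ constant along solutions, which recovers the level-set description of the first integral $H$. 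I would therefore perform the change of dependent variable $\rho=r\,e^{-\frac12\cos 2\theta}$, which removes the $O(1)$ term and produces the standard form
$$\frac{d\rho}{d\theta}=\varepsilon\,F_1(\theta,\rho)+\varepsilon^2\,F_2(\theta,\rho)+O(\varepsilon^3),$$
with $F_1,F_2$ being $2\pi$-periodic in $\theta$.

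Next I would apply the second-order averaging theorem. Since each monomial $x^iy^j$ becomes $r^{\,i+j}\cos^i\theta\sin^j\theta$ and $r=\rho\,e^{\frac12\cos 2\theta}$, the function $F_1$ is a finite sum of terms $\rho^{\,m}\,e^{\,k\cos2\theta}\,(\text{trigonometric polynomial in }\theta)$, so that $f_1(\rho)=\frac1{2\pi}\int_0^{2\pi}F_1\,d\theta$ is a Laurent polynomial in $\rho$ whose coefficients are explicit constants of modified-Bessel type $\int_0^{2\pi}e^{k\cos2\theta}\cos^a\theta\sin^b\theta\,d\theta$. From the first cited theorem, $f_1$ contributes at most one positive simple zero, hence at most one limit cycle at first order. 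To reach higher multiplicities I would impose $f_1\equiv0$; this amounts to a system of linear equations in the first-order coefficients $a_{ij},c_{ij}$, which I would solve explicitly.

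Under the relations $f_1\equiv0$ I would then compute the second averaged function $f_2(\rho)=\frac1{2\pi}\int_0^{2\pi}\big(\partial_\rho F_1\,y_1+F_2\big)\,d\theta$, where $y_1(\theta,\rho)=\int_0^\theta F_1(s,\rho)\,ds$. As for $f_1$, all $\theta$-integrals reduce to the same Bessel-type constants, so that $f_2$ is again an explicit Laurent polynomial in $\rho$, now involving both the second-order coefficients $b_{ij},d_{ij}$ and the surviving first-order ones. The conclusion would follow by showing that the numerator of $f_2$, after clearing the appropriate power of $\rho$, is a polynomial whose number of positive roots is at most three, and then by exhibiting explicit choices of the parameters for which it has exactly $3,2,1,0$ simple positive zeros; by the averaging theorem each simple positive zero corresponds to a limit cycle bifurcating from the period annulus of \eqref{1}.

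The main obstacle is precisely this last step. The symbolic evaluation of $y_1$, of the product $\partial_\rho F_1\,y_1$, and of $f_2$ is very heavy, and the real-root analysis bounding the number of positive zeros by three (and showing that four is impossible), together with the construction of the four realizing examples, is delicate; these are the computations that we carry out and verify with the help of Maple.
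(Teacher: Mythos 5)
Your setup is essentially the paper's. Passing to polar coordinates gives $dr/d\theta=-r\sin 2\theta$ with solution $r_s(\theta,r_0)=r_0e^{-\sin^2\theta}$, and your change of variable $\rho=r\,e^{-\frac{1}{2}\cos 2\theta}$ is, up to a constant factor, division by the solution $u(\theta)=e^{-\sin^2\theta}$ of the variational equation; that is exactly how the averaged functions $G_{10}$ and $G_{20}$ of \eqref{formulas} are built (the term $\frac{1}{2}\partial^2_rG_0\,u_1^2$ drops out since $G_0$ is linear in $r$), so the two normalizations produce the same integrals. Imposing $f_1\equiv 0$ through linear relations among the first-order coefficients (in the paper, $a_{10}$ and $a_{30}$ get solved in terms of $c_{01}$, $c_{03}$, $c_{21}$, with coefficients expressed through the constants $I_1,I_2,I_3$ of \eqref{integrals}) also matches.

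The gap is at the decisive step: you assert that $f_2$ has at most three positive zeros but supply no mechanism, deferring it to a ``delicate real-root analysis.'' The bound is not obtained by analyzing roots of a general Laurent polynomial; it comes from a structural fact that your proposal never identifies. A monomial of degree $i+j$ in the perturbation contributes $r^{i+j-2}$ times a trigonometric monomial of degree $i+j+1$ to $dr/d\theta$, so the power of $r_0$ and the parity of the trigonometric factor are locked together. Under the symmetry \eqref{simmetry}, $\theta\mapsto\theta+\pi$, every integrand of odd trigonometric parity integrates to zero over a period; this kills the coefficients of $r_0^{-4}$, $r_0^{-2}$ and $r_0^{0}$ in $G_{20}$, leaving exactly $G_{20}(r_0)=v_6r_0+v_4r_0^{-1}+v_2r_0^{-3}+v_0r_0^{-5}$, i.e.\ $r_0^5G_{20}(r_0)=v_6r_0^6+v_4r_0^4+v_2r_0^2+v_0$, a polynomial with only four monomials. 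Descartes' rule (Theorem \ref{Descartes}) then gives at most three positive roots --- this is precisely where ``three, and not four'' comes from --- and realizability of $3,2,1,0$ follows because $v_6,v_4,v_2,v_0$ are independent ($d_{03}$ occurs only in $v_6$, $b_{10}$ only in $v_4$, $a_{00}c_{02}$ only in $v_2$, $a_{00}c_{00}$ only in $v_0$), so their signs can be prescribed arbitrarily. Without this parity-plus-Descartes argument your plan does not yield the stated bound, however much symbolic computation is invested.
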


The paper is organized as follows: In section \ref{saveraging} we
present a summary of the averaging method of second order following
\cite{GGLD}. Next in section \ref{smain} we provide the proof of our
main Theorem \ref{main}. In section \ref{sexamples} we provide three
examples, of systems \eqref{perturbed} with 0, 1, 2 and 3 limit
cycles bifurcating from the degenerate center. At the end we present
the Appendices A, B and C.

\section{The averaging method of second order}\label{saveraging}

In this section we present the averaging method of second order
following \cite{GGLD}. In that paper the averaging theory for
differential equations of one variable is done up to any order in
the small parameter of the perturbation. We consider the analytic
differential equation
\begin{equation}
\dfrac{dr}{d\theta}=G_0(\theta, r)+\sum\limits_{k\geq
1}\varepsilon^kG_k(\theta, r),\label{eq0}
\end{equation}
with $r\in\R$, $\theta\in \aS^1$ and $\varepsilon\in(-\varepsilon_0,
\varepsilon_0)$ with $\varepsilon_0$ a small positive real value,
and the functions $G_k(\theta, r)$ are $2\pi$--periodic in the
variable $\theta.$ Note that for $\varepsilon=0$ system \eqref{eq0}
is unperturbed. Let  $r_s(\theta, r_0)$ be the solution of  system
\eqref{eq0} with $\varepsilon=0$ satisfying $r_s(0,r_0)=r_0$ and
$r_s(\theta, r_0)$ is $2\pi$ periodic for $r_0\in\mathcal{I}$ with
$\mathcal{I}$ a real open interval. We are interested in the limit
cycles of equation \eqref{eq0} which bifurcate from the periodic
orbits of the unperturbed system with initial condition
$r_0\in\mathcal{I}.$ So, we define by $r_{\varepsilon}(\theta,r_0)$
the solution of equation \eqref{eq0} satisfying
$r_{\varepsilon}(0,r_0)=r_0.$

\medskip

In what follows we denote by $u=u(\theta, r_0)$ the solution of the
variational equation
$$
\dfrac{\partial u}{\partial \theta}=\dfrac{\partial G_0}{\partial
r}(\theta, r_s(\theta, r_0))u,
$$
satisfying $u(0, r_0)=1.$

\medskip
We define
\begin{equation}\label{formulas}
\begin{array}{ccl}
u_1(\theta, r_0)&=&\displaystyle{\int\limits_{0}\limits^\theta
\dfrac{ G_1(\phi,\ r_s(\phi, r_0))}{u(\phi, r_0)}d\phi}
=\displaystyle{\int\limits_{0}\limits^\theta \dfrac{ G_1(w,\ r_s(w,
r_0))}{u(w, r_0)}d w},\\
G_{10}(r_0)&=&\displaystyle{\int\limits_{0}\limits^{2\pi} \dfrac{
G_1(\theta, \ r_s(\theta, r_0))}{u(\theta, r_0)}d\theta},\\
G_{20}(r_0)&=&\displaystyle{\int\limits_{0}\limits^{2\pi}
\left(\dfrac{ G_2(\theta, \ r_s(\theta, r_0))}{u(\theta, r_0)}
+\dfrac{\partial G_1}{\partial r}
(\theta, \ r_s(\theta, r_0))\ u_1(\theta,r_0)\right.}\\
&&\displaystyle{\qquad \qquad +\left.\dfrac{1}{2}\dfrac{\partial^2
G_0}{\partial r^2}(\theta, \ r_s(\theta, r_0))\ u_1(\theta,
r_0)^2\right)d\theta}.
\end{array}
\end{equation}

In statement (b) of Corollary 5 of \cite{GGLD} it is proved the
following result.

\begin{theorem}
Assume that the solution $r_s(\theta, r_0)$ of the unperturbed
equation \eqref{eq0} such that $r_s(0, r_0)=r_0$ is $2\pi$--periodic
for $r_0\in\mathcal{I}$ with $\mathcal{I}$ a real open interval. If
$G_{10}(r_0)$ is identically zero in $\mathcal{I}$  and
$G_{20}(r_0)$ is not identically zero in $\mathcal{I}$, then for
each simple zero $r^*\in\mathcal{I}$ of $G_{20}(r_0)=0$ there exists
a periodic solution $r_\varepsilon(\theta, r_0)$ of \eqref{eq0} such
that $r_\varepsilon(0, r_0)\rightarrow r^*$ when
$\varepsilon\rightarrow 0.$
\end{theorem}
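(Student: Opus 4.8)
The plan is to reduce the existence of bifurcating periodic solutions to the existence of zeros of a \emph{displacement function}, and then to extract the two leading terms of that function via a variational hierarchy, identifying them with $G_{10}$ and $G_{20}$. First I would set up the displacement map. Since the right-hand side of \eqref{eq0} is analytic in $(\theta,r)$, $2\pi$--periodic in $\theta$, and depends analytically on $\varepsilon$, standard results on the dependence of solutions on initial conditions and parameters guarantee that, for $r_0$ in any compact subinterval $K\subset\mathcal{I}$ and $|\varepsilon|$ small, the solution $r_\varepsilon(\theta,r_0)$ exists on $[0,2\pi]$, is unique, and is jointly analytic in $(\theta,r_0,\varepsilon)$. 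Hence it admits a Taylor expansion in $\varepsilon$,
\begin{equation*}
r_\varepsilon(\theta,r_0)=r_s(\theta,r_0)+\varepsilon\,r_1(\theta,r_0)+\varepsilon^2\,r_2(\theta,r_0)+O(\varepsilon^3),
\end{equation*}
with a remainder uniform in $(\theta,r_0)\in[0,2\pi]\times K$. I then define the displacement function $d(r_0,\varepsilon)=r_\varepsilon(2\pi,r_0)-r_0$; a $2\pi$--periodic solution through $r_0$ exists precisely when $d(r_0,\varepsilon)=0$, so the whole problem becomes the location of the zeros of $d$.

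Next I would compute the first two coefficients. Substituting the expansion into \eqref{eq0} and collecting equal powers of $\varepsilon$ after Taylor expanding each $G_k$ around $r_s$ yields a hierarchy of linear inhomogeneous equations for $r_1$ and $r_2$, both having the common homogeneous part $\partial_r G_0(\theta,r_s(\theta,r_0))$ and both satisfying $r_1(0,r_0)=r_2(0,r_0)=0$ (because $r_\varepsilon(0,r_0)=r_0$ for every $\varepsilon$). The fundamental solution of this homogeneous part is exactly $u(\theta,r_0)$, and differentiating $r_s$ with respect to $r_0$ shows $u(\theta,r_0)=\partial r_s/\partial r_0$; since $r_s$ is $2\pi$--periodic this gives the crucial normalization $u(2\pi,r_0)=1$. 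Solving the two inhomogeneous equations by variation of parameters and evaluating at $\theta=2\pi$, then substituting $r_1=u\,u_1$, identifies the coefficients with the expressions in \eqref{formulas}, namely
\begin{equation*}
r_1(2\pi,r_0)=G_{10}(r_0),\qquad r_2(2\pi,r_0)=G_{20}(r_0).
\end{equation*}
Consequently $d(r_0,\varepsilon)=\varepsilon\,G_{10}(r_0)+\varepsilon^2\,G_{20}(r_0)+O(\varepsilon^3)$.

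Finally I would invoke the implicit function theorem. Using the hypothesis $G_{10}\equiv0$ on $\mathcal{I}$, the displacement factors as $d(r_0,\varepsilon)=\varepsilon^2\bigl(G_{20}(r_0)+\varepsilon\,\rho(r_0,\varepsilon)\bigr)$ with $\rho$ analytic; thus for $\varepsilon\neq0$ the zeros of $d$ coincide with those of $F(r_0,\varepsilon):=G_{20}(r_0)+\varepsilon\,\rho(r_0,\varepsilon)$, and $F$ extends analytically to $\varepsilon=0$ with $F(r_0,0)=G_{20}(r_0)$. At a simple zero $r^*$ of $G_{20}$ one has $F(r^*,0)=0$ and $\partial F/\partial r_0(r^*,0)=G_{20}'(r^*)\neq0$, so the implicit function theorem produces a unique analytic branch $r_0=r_0(\varepsilon)$ with $r_0(0)=r^*$ and $F(r_0(\varepsilon),\varepsilon)=0$ for small $\varepsilon$; the corresponding $r_\varepsilon(\theta,r_0(\varepsilon))$ is the sought $2\pi$--periodic solution and satisfies $r_\varepsilon(0,r_0(\varepsilon))=r_0(\varepsilon)\to r^*$ as $\varepsilon\to0$. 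I expect the main obstacle to be the bookkeeping in the variational hierarchy: carrying the second-order term through variation of parameters while tracking the factors of $u$ so that the $\tfrac12\,\partial_r^2 G_0$ contribution lands in the precise form recorded in \eqref{formulas}, and verifying that the Taylor remainder is uniform enough that the $O(\varepsilon^3)$ term cannot destroy the zero detected at order $\varepsilon^2$.
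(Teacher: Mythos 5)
Your proposal is correct, and it is essentially the paper's proof: the paper does not prove this theorem itself but quotes it from statement (b) of Corollary 5 of \cite{GGLD}, and the argument there is precisely your scheme --- analytic dependence of solutions, the displacement function $d(r_0,\varepsilon)=r_\varepsilon(2\pi,r_0)-r_0$, the variational hierarchy solved by variation of parameters with the key normalization $u(2\pi,r_0)=\frac{\partial r_s}{\partial r_0}(2\pi,r_0)=1$ coming from the $2\pi$--periodicity of $r_s$ in $\theta$ for all $r_0\in\mathcal{I}$, and the implicit function theorem applied to $d(r_0,\varepsilon)/\varepsilon^2$ after using $G_{10}\equiv0$.

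One remark on the bookkeeping you yourself flagged as the main obstacle: it is real. Carrying $r_1=u\,u_1$ through the variation-of-parameters formula for $r_2$ yields
\begin{equation*}
r_2(2\pi,r_0)=\int_0^{2\pi}\left(\frac{G_2(\theta,r_s(\theta,r_0))}{u(\theta,r_0)}+\frac{\partial G_1}{\partial r}(\theta,r_s(\theta,r_0))\,u_1(\theta,r_0)+\frac{1}{2}\,\frac{\partial^2 G_0}{\partial r^2}(\theta,r_s(\theta,r_0))\,u(\theta,r_0)\,u_1(\theta,r_0)^2\right)d\theta,
\end{equation*}
so the quadratic term carries a factor $u(\theta,r_0)$ that is absent from $G_{20}$ as printed in \eqref{formulas}. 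Hence your asserted identity $r_2(2\pi,r_0)=G_{20}(r_0)$ holds with the printed formula only when $\partial^2 G_0/\partial r^2\equiv0$ or $u\equiv1$; in the present paper this is harmless because $G_0=-2r\cos\theta\sin\theta$ is linear in $r$ (the authors invoke exactly this to drop the term before computing $G_{20}$), and in general it is the integrand your derivation produces, rather than the printed one, that makes the theorem true. With that correction understood, every other step of your argument --- uniformity of the Taylor remainder on $[0,2\pi]\times K$, the factorization $d(r_0,\varepsilon)=\varepsilon^2\bigl(G_{20}(r_0)+\varepsilon\,\rho(r_0,\varepsilon)\bigr)$, and the simple-zero/implicit-function-theorem conclusion giving the branch $r_0(\varepsilon)\to r^*$ --- is sound.
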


\section{Proof of Theorem \ref{main}}\label{smain}

System \eqref{1} in polar coordinates becomes
\begin{equation}
\dot{r}=-2r^3\cos\theta \sin\theta, \qquad \dot{\theta}=r^2,
\label{1polar}
\end{equation}
or equivalently,
$$
\dfrac{dr}{d\theta}=-2r\cos\theta \sin\theta,
$$
and it has the solution $r_s(\theta, r_0)=r_0 \exp(-\sin ^2 \theta)$
satisfying that $r_s(0, r_0)=r_0.$

\medskip

Now we perturb system \eqref{1} inside the class of all cubic
polynomial differential systems as in \eqref{perturbed}. System
\eqref{perturbed} in polar coordinates give rise to the differential
equation
$$
{\frac {d\,r}{d\,\theta}}=G_0(\theta, r)+\varepsilon G_1(\theta,
r)+\varepsilon^2 G_2(\theta,
r)+O(\varepsilon^3),
$$
with
$$
\begin{array}{l}
G_0(\theta, r)=-2\,r\cos  \theta  \sin  \theta ,\\
\\
G_1(\theta,
r)=g_{1,1}(\theta)\,\dfrac{1}{r^2}+g_{1,2}(\theta)\,\dfrac{1}
{r}+g_{1,3}(\theta)\,+g_{1,4}(\theta)\,r,\\
\\
G_2(\theta,r)=g_{2,1}(\theta)\,\dfrac{1}{r^5}+g_{2,2}(\theta)\,
\dfrac{1}{r^4}+g_{2,3}(\theta)\,\dfrac{1}{r^3}
+g_{2,4}(\theta)\,\dfrac{1}{r^2}+g_{2,5}(\theta)\,\dfrac{1}{r}+
g_{2,6}(\theta)\,+g_{2,7}(\theta)\,r,
\end{array}
$$
where the expressions of the coefficients $g_{1,i}(\theta)\,$ for
$i=1,2,3,4$ and $g_{2,j}(\theta)\,$ for $j=1,2,\cdots,7$ are given
in the Appendix A.

\medskip

Additionally, we consider the variational equation
$$
\dfrac{\partial u}{\partial \theta}=\dfrac{\partial G_0}{\partial
r}(\theta, r_s(\theta, r_0)),
$$
and its solution $u(\theta, r_0)$ satisfying $u(0, r_0)=1$, namely
$u_s(\theta)=\exp(-\sin^2 \theta).$

\medskip
We define
\begin{equation}{\label{integrals}}
\begin{array}{l}
I_1=\displaystyle{\int\limits_{0}\limits^{2\pi}\exp(2\sin^2\theta)\
\cos^4\theta\ d\theta}=3.572403292...,\\
I_2=\displaystyle{\int\limits_{0}\limits^{2\pi}\exp(2\sin^2\theta)\
\cos^2\theta\ d\theta}=5.985557563...,\\
I_3=\displaystyle{\int\limits_{0}\limits^{2\pi}\exp(2\sin^2\theta) \
d\theta}=21.62373221...\\
\end{array}
\end{equation}

\begin{lemma}
Consider $I_1, I_2, I_3$ defined in \eqref{integrals}. Then for
$a_{10}=-(I_3-2I_1+I_2)/({2I_1-I_2})c_{01}$  and $a_{30}=
-2c_{03}-c_{21}$ we have that the function $G_{10}(r_0)$ defined in
\eqref{formulas} is identically zero.
\end{lemma}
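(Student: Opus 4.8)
The plan is to substitute the explicit expressions $r_s(\theta,r_0)=r_0\exp(-\sin^2\theta)$ and $u(\theta,r_0)=\exp(-\sin^2\theta)$ into the definition of $G_{10}$ and exploit the Laurent structure of $G_1$ in the radial variable. Writing $G_1(\theta,r)=\sum_{k=1}^{4}g_{1,k}(\theta)\,r^{p_k}$ with $(p_1,p_2,p_3,p_4)=(-2,-1,0,1)$, each term becomes, after evaluation at $r=r_s$ and division by $u$,
\[
\frac{g_{1,k}(\theta)\,r_s(\theta,r_0)^{p_k}}{u(\theta,r_0)}
=g_{1,k}(\theta)\,r_0^{p_k}\,\exp\big((1-p_k)\sin^2\theta\big).
\]
Consequently $G_{10}(r_0)$ is a Laurent polynomial in $r_0$,
\[
G_{10}(r_0)=\frac{A_1}{r_0^{2}}+\frac{A_2}{r_0}+A_3+A_4\,r_0,
\]
where $A_1$ and $A_3$ carry the weights $\exp(3\sin^2\theta)$ and $\exp(\sin^2\theta)$, the coefficient $A_2$ carries $\exp(2\sin^2\theta)$, and $A_4$ is unweighted. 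Since $1/r_0^{2},1/r_0,1,r_0$ are linearly independent functions on $\mathcal{I}$, the condition $G_{10}\equiv0$ is equivalent to $A_1=A_2=A_3=A_4=0$.

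The main tool for evaluating the $A_j$ is the symmetry of the weights. Every weight $\exp(c\sin^2\theta)$ is even in $\theta$ and $\pi$--periodic, so under $\theta\mapsto-\theta$ and $\theta\mapsto\pi-\theta$ any trigonometric monomial $\cos^{a}\theta\,\sin^{b}\theta$ with $a$ or $b$ odd integrates to zero over $[0,2\pi]$. First I would track the provenance of the four coefficients: $g_{1,1}$ comes only from the constant perturbation terms, $g_{1,2}$ from the degree--one terms, $g_{1,3}$ from the degree--two terms, and $g_{1,4}$ from the degree--three terms, each receiving a contribution from the radial part $\dot r_1/r^2$ and from the angular correction $2\cos\theta\sin\theta\,\dot\theta_1/r$, where $\dot\theta_1=(xQ_1-yP_1)/r^2$. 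A direct inspection then shows that every monomial appearing in $g_{1,1}$ and in $g_{1,3}$ has at least one odd trigonometric exponent, so $A_1=0$ and $A_3=0$ hold identically, imposing no condition on the parameters.

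It remains to evaluate $A_2$ and $A_4$, for which only the even--even monomials survive. For $A_2$ the surviving part of $g_{1,2}$ is $a_{10}\cos^{2}\theta+c_{01}\sin^{2}\theta+2(c_{01}-a_{10})\cos^{2}\theta\sin^{2}\theta$; using $\int_{0}^{2\pi}\cos^{2}\theta\,e^{2\sin^2\theta}\,d\theta=I_2$, $\int_{0}^{2\pi}\sin^{2}\theta\,e^{2\sin^2\theta}\,d\theta=I_3-I_2$ and $\int_{0}^{2\pi}\cos^{2}\theta\sin^{2}\theta\,e^{2\sin^2\theta}\,d\theta=I_2-I_1$, the equation $A_2=0$ reduces to $a_{10}(2I_1-I_2)+c_{01}(I_3-2I_1+I_2)=0$, which is exactly the stated value of $a_{10}$. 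For $A_4$ the weight is trivial and the surviving monomials are integrated with the elementary values $\int_{0}^{2\pi}\cos^{4}\theta\,d\theta=\int_{0}^{2\pi}\sin^{4}\theta\,d\theta=\tfrac{3\pi}{4}$, $\int_{0}^{2\pi}\cos^{2}\theta\sin^{2}\theta\,d\theta=\tfrac{\pi}{4}$ and $\int_{0}^{2\pi}\cos^{4}\theta\sin^{2}\theta\,d\theta=\int_{0}^{2\pi}\cos^{2}\theta\sin^{4}\theta\,d\theta=\tfrac{\pi}{8}$; collecting terms gives $A_4=\pi\big(\tfrac12 a_{30}+\tfrac12 c_{21}+c_{03}\big)$, so $A_4=0$ is equivalent to $a_{30}=-2c_{03}-c_{21}$.

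I expect the main obstacle to be purely organizational rather than conceptual: correctly assembling $g_{1,2}$ and $g_{1,4}$ from their two sources—the radial term $\dot r_1/r^2$ and the angular correction, the latter requiring the appropriate homogeneous piece of $\dot\theta_1$—before the symmetry reduction can be applied. Once the odd monomials are discarded the remaining integrals are immediate, and the two nonvanishing conditions produce precisely the claimed values of $a_{10}$ and $a_{30}$.
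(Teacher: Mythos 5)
Your proposal is correct and follows essentially the same route as the paper: the same Laurent decomposition of $G_{10}(r_0)$ into coefficients of $r_0^{-2},r_0^{-1},1,r_0$, the vanishing of the $r_0^{-2}$ and $r_0^{0}$ coefficients by parity of the trigonometric monomials (you use the reflections $\theta\mapsto-\theta$ and $\theta\mapsto\pi-\theta$ where the paper uses the translation $\theta\mapsto\theta+\pi$, an immaterial difference here), and the explicit evaluation of the two surviving integrals in terms of $I_1,I_2,I_3$ and elementary moments, yielding exactly the stated conditions on $a_{10}$ and $a_{30}$. Your computed coefficients $a_{10}(2I_1-I_2)+c_{01}(I_3-2I_1+I_2)$ and $\tfrac{\pi}{2}(2c_{03}+a_{30}+c_{21})$ agree with the paper's.
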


\begin{proof}
We have
$$
\dfrac{G_1(\theta, r_s(\theta,r_0))}{u(\theta,r_0)}=A(\theta)\,
\dfrac{1}{r_0^2}+B(\theta)\,\dfrac{1}{r_0}+C(\theta)\,+D(\theta)\,r_0,
$$
with
$$
\begin{array}{l}
A(\theta)\,=\left[\sin \theta  \left( 2\,  \cos^2 \theta +1 \right) c_{00}

 +\cos \theta  \left( -1+
2\,  \cos^2 \theta  \right) a_{00}\right]{{\rm e}^{3
\,  \sin^2 \theta }},\\
\\

B(\theta)\,=\left[\cos \theta \sin \theta  \left( -1+2\,
  \cos^2 \theta  \right) a_{01}
 +
  \cos^2 \theta  \left( -1+2\,  \cos^2 \theta  \right)a_{{1
0}}\right.\\
\left.- \left( -  \cos^2 \theta +2\,  \cos^4 \theta -1 \right)c_{01}
 +\cos \theta \sin \theta  \left( 2\,  \cos^2 \theta +1
 \right) c_{10}\right]{{\rm e}^{2
\,  \sin^2 \theta }},
\\
\\
C(\theta)\,=\left[-
\cos \theta  \left( -3\,  \cos^2 \theta +1+2\,  \cos^4 \theta
 \right)a_{02}
 +  \cos^3 \theta  \left( -
1+2\,  \cos^2 \theta  \right)a_{20}\right.
\\
 +  \cos^2 \theta \sin \theta  \left( -1+2\,
  \cos^2 \theta  \right) a_{11}
 +  \sin^3 \theta  \left( 2\,  \cos^2 \theta +1 \right)c_{{0
2}}
\\
\left.+  \cos^2 \theta \sin \theta  \left( 2\,  \cos^2 \theta +1 \right)c_{20}
-\cos \theta  \left( -
  \cos^2 \theta +2\,  \cos^4 \theta -1 \right)c_{11}\right]{{\rm e}^{  \sin^2 \theta }},\\
\\
D(\theta)\,=
\cos \theta   \sin^3 \theta  \left( -1+2\, \cos^2 \theta   \right)a_{03}
+  \cos^4 \theta
 \left( -1+2\,  \cos^2 \theta  \right)a_{30}\\
  +
  \cos^3 \theta \sin \theta  \left( -1+2\,  \cos^2 \theta   \right)a_{21}
-  \cos^2 \theta
 \left( -3\,  \cos^2 \theta +1+2\,
  \cos^4 \theta  \right)a_{12}\\
 +
 \left( 1-3\,  \cos^4 \theta +2\,
 \cos^6 \theta   \right)c_{03}
  +
  \cos^3 \theta \sin \theta  \left( 2\,  \cos^2 \theta +1
 \right)c_{30}\\
 -  \cos^2 \theta
 \left( -  \cos^2 \theta +2\,  \cos^4 \theta -1 \right)c_{21}
+\cos
 \theta   \sin^3 \theta
 \left( 2\,  \cos^2 \theta +1 \right)c_{12}.
\end{array}
$$

Now
$$
G_{10}(r_0)=\displaystyle{\int\limits_{0}\limits^{2\pi}\dfrac{G_1(\theta,
r_s(\theta,r_0))}{u(\theta,r_0)}}d\theta=
\displaystyle{\int\limits_{0}\limits^{2\pi}\left(A(\theta)\,\dfrac{1}
{r_0^2}+B(\theta)\,\dfrac{1}{r_0}+C(\theta)\,+D(\theta)\,r_0\right)d\theta,}
$$
and considering the change of coordinates $\theta=\phi+\pi$ in the
interval $[0, 2\pi]$ and the symmetries
\begin{equation}\label{simmetry}
\sin(\theta+\pi)=-\sin\theta, \qquad
\cos(\theta+\pi)=-\cos\theta,
\end{equation}
we have that
$$
\displaystyle{\int\limits_{0}\limits^{2\pi}A(\theta) d\theta}=\displaystyle{
\int\limits_{-\pi}\limits^{\pi}A(\phi) d\phi}=0,
\qquad \displaystyle{\int\limits_{0}\limits^{2\pi}C(\theta) d\theta}=
\displaystyle{\int\limits_{-\pi}\limits^{\pi}C(\phi) d\phi}=0.
$$
So we have
$$
\begin{array}{ccl}
G_{10}(r_0)&=&\displaystyle{\int\limits_{0}\limits^{2\pi}  \dfrac{B(\theta)\,}
{r_0} d\theta}+\displaystyle{\int\limits_{0}\limits^{2\pi} D(\theta)\, r_0 d\theta}\\
&=&\left[(2a_{10}-2c_{01})I_1+(c_{01}-a_{10})I_2+c_{01}I_3\right]\dfrac{1}{r_0}
+\dfrac{\pi}{2}\left(2c_{03}+a_{30}+c_{21}\right)r_0,
\end{array}
$$
\noindent and therefore  $G_{10}\equiv 0$ if
$$
a_{10}=-\dfrac{I_3-2I_1+I_2}{2I_1-I_2}c_{01}=-17.65322447\ldots
c_{01}, \qquad a_{30}= -2c_{03}-c_{21}.
$$
This completes the proof of the lemma.
\end{proof}

Now we have
$$
\dfrac{G_2(\theta,
r_s(\theta,r_0))}{u(\theta,r_0)}=A_5(\theta)\,\dfrac{1}{r_0^5}+
A_4(\theta)\,\dfrac{1}{r_0^4}+A_3(\theta)\,\dfrac{1}{r_0^3}
+A_2(\theta)\,\dfrac{1}{r_0^2}+A_1(\theta)\,\dfrac{1}{r_0}+
A_0(\theta)\,+\tilde{A}_1(\theta)\,r_0,
$$
with
$A_5(\theta)\,,A_4(\theta)\,,A_3(\theta)\,,A_2(\theta)\,,
A_1(\theta)\,,A_0(\theta)\,,\tilde{A}_1(\theta)\,$
are given in the Appendix B.

\medskip

We note that
$$
\displaystyle{\int\limits_{0}\limits^{2\pi}A_4(\theta)\, d\theta
}=0, \qquad \displaystyle{\int\limits_{0}\limits^{2\pi}A_2(\theta)\,
d\theta }=0, \qquad
\displaystyle{\int\limits_{0}\limits^{2\pi}A_0(\theta)\, d\theta
}=0,
$$
because of the symmetries \eqref{simmetry}.
So we have
\begin{equation}\label{ints}
\displaystyle{\int\limits_{0}\limits^{2\pi}\dfrac{G_2(\theta,
r_s(\theta,r_0))}{u(\theta,r_0)}}d\theta
=\dfrac{1}{r_0^5}\displaystyle{\int\limits_{0}\limits^{2\pi}A_5(\theta)\,d\theta}
+\dfrac{1}{r_0^3}\displaystyle{\int\limits_{0}\limits^{2\pi}A_3(\theta)\,d\theta
}+\dfrac{1}{r_0}\displaystyle{\int\limits_{0}\limits^{2\pi}A_1(\theta)\,d\theta}
+\left(\displaystyle{\int\limits_{0}\limits^{2\pi}\tilde{A}_1(\theta)\,d\theta}\right)
r_0,
\end{equation}
and we recall that the expressions of
$A_5(\theta)\,,A_3(\theta)\,,A_1(\theta)\,,\tilde{A}_1(\theta)\,$
are given in the Appendix B. We have
$$
\begin{array}{ll}
\displaystyle{\int\limits_{0}\limits^{2\pi}A_5(\theta)\, d\theta }&=\displaystyle{ \left( -4\int _{0}^{2\,\pi }\,{{\rm e}^{6\,
  \sin^2  \theta }}  \cos^4
\theta  {d\theta }+2\int _{0}^{2\,\pi }\,{{\rm e}^{6
\,  \sin^2 \theta }}  \cos^2 \theta {d\theta }+\int _{0}^{2\,\pi }\!{{\rm e}^{6\,
  \sin^2 \theta }}{d\theta } \right)a_{{00}}c_{{00}}}\\
 &=665.2264930\ldots a_{{00}}c_{{00}},\\

 \displaystyle{\int\limits_{0}\limits^{2\pi}A_3(\theta)\, d\theta }&=  239.0000390\ldots\,a_{01}c_{01}+ 97.83745135\ldots\,a_{00}c_{02}
 +97.83745135\ldots\,a_{02}c_{00}\\
 &- 257.2692783\ldots\,c_{01}c_{10}+
 12.93483815\ldots\,a_{00}c_{20}- 7.99641945\ldots\,a_{00}a_{11}\\
 &+12.93483815\ldots\,a_{20}c_{00}- 28.92767705\ldots\,c_{00}c_{11},\\

 \displaystyle{\int\limits_{0}\limits^{2\pi}A_1(\theta)\, d\theta }&=1.159249021\ldots\,b_{10}+ 20.46448319\ldots\,d_{{01}}+ 2.318498043\ldots\,a_{{11}
}c_{11}- 4.73165232\ldots\,c_{02}c_{11}\\
&+ 2.318498043\ldots\,a_{20}c_{{0
,2}}+ 2.318498045\ldots\,a_{02}c_{20}- 3.761715750\ldots\,c_{11}c_{20}\\
&+ 14.47892563\ldots\,a_{02}c_{02}- 1.253905250\ldots\,a_{02}a_{11}+
 0.189312456\ldots\,a_{20}c_{20}\\
 &+ 0.094656226\ldots\,a_{11}a_{20}+
 0.647510463\ldots\,a_{21}c_{01}- 5.110277230\ldots\,c_{03}c_{10}\\
 &+
 2.318498043\ldots\,a_{12}c_{10}+ 2.22384182\ldots\,a_{01}c_{21}+
 36.6143963\ldots\,a_{03}c_{01}\\
& - 3.95102821\ldots\,c_{10}c_{21}-
 1.25390525\ldots\,a_{01}a_{12}- 45.6606188\ldots\,c_{01}c_{12}\\
 &-
 7.1036910\ldots\,c_{01}c_{30}+ 14.28961317\ldots\,a_{01}c_{03},\\

\displaystyle{\int\limits_{0}\limits^{2\pi}\tilde{A}_1(\theta)\, d\theta }&= - 0.3926990817\ldots\,c_{30}c_{21}+ 0.1963495408\ldots\,c_{30}a_{12}+
 2.159844949\ldots\,a_{03}c_{03}\\
&- 0.1963495408\ldots\,a_{03}a_{12}-
 0.7853981634\ldots\,c_{12}c_{21}+ 0.3926990817\ldots\,a_{03}c_{21}\\
 &-1.178097245\ldots\,c_{03}c_{12}+ 0.3926990817\ldots\,c_{12}a_{12}+
 0.9817477042\ldots\,c_{03}c_{30}\\
& + 1.570796327\ldots\,d_{{21}}+ 3.141592654\ldots
\,d_{03}+ 1.570796327\ldots\,b_{30}.\\
\\
%K_1int&=&1/2\,d_{{2,1}}\pi +1/2\,b_{30}\pi +d_{03}\pi -1/4\,c_{12}c_{{
%2,1}}\pi -1/8\,c_{21}c_{30}\pi -1/16\,a_{03}a_{12}\pi +{
%\frac {11}{16}}\,c_{03}a_{03}\pi\\
%&&+1/8\,a_{03}c_{21}\pi +1/
%8\,a_{12}c_{12}\pi +1/16\,a_{12}c_{30}\pi -3/8\,c_{03}c
%_{{1,2}}\pi +{\frac {5}{16}}\,c_{03}c_{30}\pi
\end{array}
$$
\begin{remark} \label{remark1}
(a) Looking at the expressions of $A_3, A_1,\tilde{A}_1$ in the
Appendix B we can have the exact definition for the numerical
coefficients which appear in the previous integrals. Thus for
instance
$$
239.0000390\cdots=-\displaystyle{\int\limits_{0}\limits^{2\pi}}{\frac {{{\rm e}^{4\,  \sin^2  \theta}}
 \left( \cos^2  \theta -1 \right)    \left( 2\,{ I_1}-4{ I_3}\,  \cos^4 \theta
   +2{ I_3}\,  \cos^2 \theta -{ I_2} \right) }{2\,{ I_1}-{ I_2}}} d \theta,
$$
and $I_1,I_2,I_3$ satisfying relations \eqref{integrals}.

\medskip

(b) All the computations of this paper have been verified with the
algebraic manipulator Maple \cite{maple}.
\end{remark}

We additionally have
$$
\dfrac{\partial G_1}{\partial r}(\theta, \ r_s(\theta,
r_0))=B_0(\theta)\,+\dfrac{B_1(\theta)\,}{r_0^2}+\dfrac{B_2(\theta)\,}{r_0^3},
$$
with
$$
\begin{array}{ll}
B_0(\theta)\,=&
\cos^2 \theta \left( 1+ 2\,
  \cos^2 \theta
 - 4\,  \cos^4 \theta \right)\, c_{21}\\
 &+
\left( - 2\,
  \cos^6 \theta + 1- \cos^4 \theta  \right) c_{03}\\
 &+ \cos \theta \left(  \,
\sin \theta   \cos^2 \theta - 2\,  \cos^4 \theta  \sin \theta + \, \sin \theta
 \right) c_{12}\\
 &+ \cos^3 \theta \left(  \,\sin \theta   + 2\,  \cos^2 \theta \sin \theta  \right) c_{30}
 \\
 &+\sin \theta \,\cos \theta
 \left(  3\,   \cos^2 \theta - 1  - 2\,  \cos^4 \theta   \right) a_{03}\\
 &+ \cos^2 \theta\left( -1 \,   + 3\,  \cos^2 \theta - 2\,  \cos^4 \theta  \right) a_{12}\\
 &+  \,\sin \theta   \cos^3 \theta\left( -1 + 2\,  \cos^2 \theta
 \right) a_{21},\\
 \\
 B_1(\theta)\,=&%+\dfrac
 { \left( - 1- 18.65322447...\,\cos^2\theta
 +
 37.30644894...\,   \cos^4 \theta  \right) {{\rm e}^{  2\sin^2 \theta }} c_{01}}%{r_0^2}
\\
%\dfrac
&{\,
-\sin \theta \cos \theta   \left(2\,   \cos^2 \theta  +1\right) {{\rm e}^{  2\sin^2 \theta }} c_{10}}%{r_0^2}
\\
&+%\dfrac{
\sin \theta \, \cos \theta \left( - 2\,   \cos^2 \theta  + 1 \right) {{\rm e}^{  2\sin^2 \theta }} a_{01},%{{{\it r_0}}^{2}},
 \\
 \\
B_2(\theta)\,=&%+{\dfrac
{-2\,\sin
 \theta \left(  2\,   \cos^2 \theta  + 1  \right) c_{00}+ 2\left( - 2\, + \,  {{\rm e}^{  3\sin^2 \theta }} \cos
 \theta  \right) a_{00}}.
 %{{{\it r_0}}^{3}}}
\end{array}
$$

Now we have
$$
\dfrac{ G_1(w,\ r_s(w, r_0))}{u(w, r_0)}=\dfrac{C_2(w)\,}{r_0^2}+\dfrac{C_1(w)\,}{r_0}+C_0(w)\,+{\tilde C}_1(w)\, r_0,
$$
with
$$
\begin{array}{ll}
C_2(w)\,=&{{\rm e}^{3 \sin^2  w }}\left[\cos  w \left( 2\,  \cos^2  w -1 \right) a_{00}
+  \sin  w   \left( 1+2\,  \cos^2  w
 \right) c_{00}\right],\\
 \\
 C_1(w)\,=&\Big[ \cos  w \sin  w\left( -1  +2\,
    \cos^2  w     \right) a_{01}\\

 &+ \left(1+ \dfrac {I_3}{2I_1-I_2}   (\cos^2 w -2\cos^4 w )  \right) c_{01}\\
 &+ \cos  w \sin  w\ \left(  1  +2\, \cos^2  w   \right) c_{10}\Big] {{\rm e}^{
 2\sin^2  w   }},
\\
\\
C_0(w)\,=& \left[ \cos^3  w   \left( 2\,  \cos^2  w -1 \right) a_{20}
 -\cos  w \left( -3\,
  \cos^2  w +2\,  \cos^4  w  +1 \right) a_{02}\right.\\
 &+\sin  w    \cos^2  w  \left( 2\,  \cos^2  w -1 \right) a_{11}
 +\sin  w    \cos^2  w  \left( 1+2\,  \cos^2  w  \right) c_{20}\\
 &+c_{02}  \sin^3  w   \left( 1+2\,  \cos^2  w
 \right)\left.
  -\cos  w \left( 2\,  \cos^4  w -1-  \cos^2  w  \right) c_{11}\right]{{\rm e}^{  \sin^2  w  }},
\\
\\
{\tilde{C}_1}(w)\,=&- \left(   \cos^4  w  +2\,  \cos^6  w -1 \right) c_{03}
 +  \sin^3  w  \cos  w \left( 1+2\,
  \cos^2  w  \right)c_{12}\\
 &-  \cos^2  w  \left( -1-2\,  \cos^2  w +4\,  \cos^4  w
 \right)c_{21} +  \cos^3  w  \sin  w   \left( 1+2\,  \cos^2  w  \right)c_{30}\\
&+ \sin^3  w  \cos  w \left( 2\, \cos^2  w -1 \right)a_{03}  + \sin^2  w
  \cos^2  w  \left( 2\,  \cos^2  w -1 \right) a_{12} \\
 &+ \cos^3  w  \sin  w   \left( 2\,
  \cos^2  w -1 \right)a_{21}.
\end{array}
$$

\noindent Additionally, from \eqref{formulas} we obtain
$$
u_1(\theta, r_0)=\dfrac{1}{r_0^2}\displaystyle{\int\limits_{0}\limits^\theta C_2(w)\,}dw
+\dfrac{1}{r_0}\displaystyle{\int\limits_{0}\limits^\theta C_1(w)\,}dw+ \displaystyle{\int\limits_{0}\limits^\theta}C_0(w)\,dw
+r_0\displaystyle{\int\limits_{0}\limits^\theta {\tilde C}_1(w)\, }dw,
$$
and so
\begin{equation}\label{ss}
\dfrac{\partial G_1}{\partial r}
(\theta, \ r_s(\theta, r_0))\ u_1(\theta,r_0)=s_{5}(\theta)\, \dfrac{1}{r_0^5}+s_{4}(\theta)\, \dfrac{1}{r_0^4}+s_3(\theta)\, \dfrac{1}{r_0^3}
+s_2(\theta)\, \dfrac{1}{r_0^2}+s_1(\theta)\, \dfrac{1}{r_0}+s_0(\theta)\, +\tilde{s}_1(\theta)\,  r_0,
\end{equation}
and the explicit expressions of $s_i(\theta)\, $ for $i=0,1,\cdots,5$ and $\tilde{s}_1(\theta)\, $ are given in the Appendix C.

\medskip

Since $\dfrac{\partial^2 G_0}{\partial r^2}=0$ from \eqref{formulas}
we have that
$$
G_{20}(r_0)=\displaystyle{\int\limits_{0}\limits^{2\pi}
\left(\dfrac{ G_2(\theta, \ r_s(\theta, r_0))}{u(\theta, r_0)}
+\dfrac{\partial G_1}{\partial r}
(\theta, \ r_s(\theta, r_0))\ u_1(\theta,r_0)\right)d\theta},\\
$$
and we obtain
$$
r_0^5{G_{20}(r_0)}=v_6 r_0^6+v_4 r_0^4+v_2 r_0^2+v_0,
$$
with
$$
\begin{array}{ll}
v_6=& - 0.3926990800\cdots\,c_{21}c_{30}+ 0.1963495397\cdots\,a_{12}c_{
{30}}+ 2.159844949\cdots\,a_{03}c_{03}\\
&- 0.1963495365\cdots\,a_{03}a_{{1
2}} - 0.7853981634\cdots\,c_{12}c_{21}+ 0.3926990817\cdots\,a_{03}c_{{21}
}
\\
&- 1.178097245\cdots\,c_{03}c_{12}+ 0.3926990817\cdots\,a_{12}c_{12}+
 0.9817477042\cdots\,c_{03}c_{30}\\

 &+ 1.570796327\cdots\,d_{{21}}+ 3.141592654\cdots
\,d_{03}+ 1.570796327\cdots\,b_{30},
\\
\\
v_4=&-
 3.155691751\cdots\,a_{21}c_{01}+ 6.612510180\cdots\,c_{03}c_{10}+
 1.786201647\cdots\,a_{12}c_{10}\\
 &+ 2.413154277\cdots\,a_{01}c_{21}+
 38.88726613\cdots\,a_{03}c_{01}- 1.253905255\cdots\,c_{10}c_{21}\\
 &-
 1.206577137\cdots\,a_{01}a_{12}- 68.30745733\cdots\,c_{01}c_{12}-38.88726635\cdots\,c_{01}c_{30}\\

 &+ 13.27234849\cdots\,a_{01}c_{03}+
 2.318498043\cdots\,a_{11}c_{11}- 4.73165232\cdots\,c_{02}c_{11}\\

& +2.318498043\cdots\,a_{20}c_{02}+ 2.318498045\cdots\,a_{02}c_{20}-
 3.761715750\cdots\,c_{11}c_{20}\\
 &+ 14.47892563\cdots\,a_{02}c_{02}-1.253905250\cdots\,a_{02}a_{11}+ 0.189312456\cdots\,a_{20}c_{20}\\
 &+
 0.094656226\cdots\,a_{11}a_{20}+ 1.159249021\cdots\,b_{10}+ 20.46448319\cdots
\,d_{{01}},\\
\\
v_2=
&95.95703341\cdots\,a_{00}c_{02}+105.7377762\cdots\,a_{02}c_{00}+ 239.0000390\cdots\,a_{01}c_{01}\\

&-7.649140220\cdots\,a_{00}a_{11}+ 16.68739736\cdots\,a_{00}c_{20}-110.9164314\cdots\,c_{00}c_{11}\\
&- 257.2692783\cdots\,c_{01}c_{10}-0.000001\cdots\,{c_{01}}^{2}
- 31.79348852\cdots\,a_{20}c_{00},\\
\\
v_0=& 665.2264933\cdots\,a_{00}c_{00}.
\end{array}
$$
We have that the coefficients  $v_6,v_4,v_2,v_0$ are independent
because $d_{03}$ only appears in $v_6$, $b_{10}$ only appears in
$v_4$, $a_{00}c_{02}$ only appears in $v_2$, and $a_{00}c_{00}$ only
appears in $v_0.$

\medskip

Now we are going to use Descartes Theorem:

\begin{theorem}[Descartes Theorem]\label{Descartes}
Consider the real polynomial $p(x)=a_{i_1}x^{i_1}+a_{i_2}x^{i_2}
\cdots+a_{i_r}x^{i_r}$ with $0\leq i_1< i_2< \cdots<i_r$ and
$a_{i_j}\neq 0$ real constants for $j\in\{1,2,\cdots,r\}.$ When
$a_{i_j}a_{i_{j+1}}<0$, we say that $a_{i_j}$ and $a_{i_{j+1}}$ have
a variation of sign. If the number of variations of signs is $m$,
then $p(x)$ has at most $m$ positive real roots. Moreover, it is
always possible to choose the coefficients of $p(x)$ in such a way
that $p(x)$ has exactly $r-1$ positive real roots.
\end{theorem}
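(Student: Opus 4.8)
The plan is to establish the two assertions separately: the upper bound stating that $p$ has at most $m$ positive roots, and the sharpness statement that $r-1$ positive roots are attainable. Throughout I write $Z_+(p)$ for the number of positive real roots of $p$ counted with multiplicity and $V(p)$ for the number of sign variations of its coefficient sequence, so the first claim is $Z_+(p)\le V(p)=m$. Since dividing $p(x)$ by $x^{i_1}$ changes neither $Z_+$ nor $V$, I would first reduce to the case $i_1=0$, so that $p$ has a nonzero constant term $a_{i_1}$.

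For the upper bound I would argue by induction on the number of terms $r$, using Rolle's theorem to pass from $p$ to $p'$. The base case $r=1$ is trivial, since then $V(p)=0$ and $p(x)=a_{i_1}x^{i_1}$ has no positive root. For the inductive step, note that $p'(x)=\sum_{j\ge 2} i_j a_{i_j}x^{i_j-1}$ again has $r-1$ terms, and because $i_j\ge 1$ for $j\ge 2$ its coefficient signs are exactly those of $(a_{i_2},\dots,a_{i_r})$; hence $V(p)=V(p')+\delta$, where $\delta=1$ if $a_{i_1}a_{i_2}<0$ and $\delta=0$ otherwise. Rolle between consecutive positive roots gives $Z_+(p)\le Z_+(p')+1$, so when $\delta=1$ the inductive hypothesis yields $Z_+(p)\le Z_+(p')+1\le V(p')+1=V(p)$. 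The delicate case is $\delta=0$, i.e. $a_{i_1}$ and $a_{i_2}$ of the same sign: then $p(0)=a_{i_1}$ and $p'(0^+)$, whose sign is that of $a_{i_2}$, agree in sign, so $p$ moves away from $0$ near the origin; since $p$ must return to $0$ at its smallest positive root $\rho_1$, the mean value theorem forces a zero of $p'$ strictly inside $(0,\rho_1)$, which upgrades Rolle to $Z_+(p)\le Z_+(p')\le V(p')=V(p)$. This extra-zero-of-the-derivative bookkeeping in the $\delta=0$ case is the main obstacle of the first part.

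For the sharpness statement I would fix the exponents $0\le i_1<\cdots<i_r$ and construct the coefficients directly. Pick any $r-1$ distinct positive reals $\beta_1<\cdots<\beta_{r-1}$ and impose the $r-1$ homogeneous linear conditions $\sum_{j=1}^r a_{i_j}\beta_k^{\,i_j}=0$ for $k=1,\dots,r-1$ on the unknowns $a_{i_j}$. The coefficient matrix $(\beta_k^{\,i_j})$ is a generalized Vandermonde matrix, and the classical fact that all of its maximal minors are nonzero (the power functions $x^{i_j}$ form a Chebyshev system on $(0,\infty)$) guarantees that the kernel is one dimensional and that a spanning vector has every entry nonzero; indeed, by Cramer's rule these entries are, up to sign, the maximal minors. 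Taking $p$ with such coefficients produces a polynomial with the prescribed roots $\beta_1,\dots,\beta_{r-1}$, so $Z_+(p)\ge r-1$. On the other hand $V(p)\le r-1$ trivially, as there are only $r-1$ consecutive pairs of coefficients, and the bound already proved gives $Z_+(p)\le V(p)\le r-1$. Hence $p$ has exactly $r-1$ positive roots, completing the proof. The only nontrivial input here is the nonvanishing of the generalized Vandermonde minors, and this is where I expect the real content of the sharpness claim to reside.
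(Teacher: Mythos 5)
Your proof is correct, but note that the paper does not actually prove this statement: it only cites pages 82--83 of Berezin and Zhidkov for it, so you have supplied an argument where the authors give none. Your first part is the standard Rolle induction on the number of terms, and the bookkeeping in the case $a_{i_1}a_{i_2}>0$ (the extra critical point of $p$ in $(0,\rho_1)$ forced by $p$ and $p'$ initially agreeing in sign) is exactly the point that makes the induction close, so that part is complete. For the sharpness part, your interpolation construction works, but the appeal to the nonvanishing of generalized Vandermonde minors is heavier than necessary and can be made self-contained: take any nonzero vector in the kernel of the $(r-1)\times r$ system $\sum_j a_{i_j}\beta_k^{i_j}=0$ (which exists by dimension count alone); if some coefficient of the resulting $p$ vanished, $p$ would be a nonzero polynomial with $r'\le r-1$ terms vanishing at the $r-1$ distinct points $\beta_1,\dots,\beta_{r-1}$, contradicting the bound $Z_+(p)\le V(p)\le r'-1\le r-2$ that you have already established. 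This replaces the external Chebyshev-system input by the first half of the theorem and simultaneously shows all $r$ coefficients are nonzero, after which your concluding step ($r-1\le Z_+(p)\le V(p)\le r-1$) is unchanged.
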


For a proof of Descartes Theorem see pages 82--83 of \cite{Berezin}.

\medskip

So from Descartes Theorem we can choose $v_6,v_4,v_2,v_0$ in order
that the $G_{20}$ has 3,2,1 or 0 real positive roots. This completes
the proof of the first part of  Theorem \ref{main}.

\begin{remark}\label{r2}
Again the exact definition for the numerical coefficients which
appear in $v_6,v_4,v_2$ and $v_0$ are given in Appendices B and C.
For instance
$$
v_0=\displaystyle{\int\limits_{0}\limits^{2\pi}}A_5
d\theta+\displaystyle{\int\limits_{0}\limits^{2\pi}}s_{5,3}
d\theta=\displaystyle{\int\limits_{0}\limits^{2\pi}}A_5 d\theta
=665.2264933...
$$
\end{remark}

For completing the proof of Theorem \ref{main} we shall provide
examples of system \eqref{perturbed} with 3, 2, 1 and 0 limit
cycles. In fact, strictly speaking it is not necessary to provide
examples with 3, 2, 1 and 0 limit cycles but we want to provide such
examples.

\section{Examples}\label{sexamples}

\noindent{\bf Example with 3 limit cycles}\\
In Figure \ref{tres} we see that  for $\varepsilon=0.001$ the system
\begin{equation}
% [inline block 0: 114 envs, 161929 chars -> data_tex | \begin{array}{ll}\label{tris} \dot{x}=&-y(3x^2+y^2)+\varepsilon+\varepsilon^2(3570.576292...]

 }
$$

\end{document}